\title{An elementary proof of an isoperimetric inequality for paths with finite $p$-variation.}
\author{George Galvin}
\date{August 2016}
\theoremstyle{definition}
\newtheorem{definition}{Definition}[section]
\newtheorem{theorem}{Theorem}[section]
\newtheorem{lemma}[theorem]{Lemma}
\newcommand{\R}{\mathbb{R}}
\newcommand{\Ptn}{\mathcal{P}}
\begin{document}

\maketitle

\begin{abstract}

In this article we will prove that if the continuous closed curve $\gamma : [0, 1] \rightarrow \R^2$ has finite $p$-variation with $p < 2$, then
\begin{equation*}
(\iint\limits_{\R^2}|\eta(\gamma, (x, y))|^q \,dx \,dy)^{1/q} \le  (\frac{1}{2})^\frac{1}{q}(\zeta(\frac{2}{pq})-1)(||\gamma||_{p, [0, 1]})^{\frac{2}{q}}
\end{equation*}
for all $q \in [1, \frac{2}{p})$, where $\eta(\gamma, (x, y))$ is the winding number of $\gamma$ at $(x, y), \zeta$ is the Reimann zeta function, and
$||\gamma||_{p, [0, 1]}$ is the $p$-variation of $\gamma$ on the interval $[0, 1]$.

Our main contribution is that we have explicitly given a bound by known constants, and we have found this by an elementary proof. We are going to be using a method introduced by L.C. Young 
\cite{young} in 1936.
\end{abstract}

\section{Introduction}
Isoperimetric problems have been studied since the time of Ancient Greece. The simplest isoperimetric problem was solved in the second century BC, when the Greek mathematician Zenodorus proved that a circle has greater area than any polygon with the same perimeter . This was later generalised to produce the classical isoperimetric inequality, which states that for any shape with perimeter  $L$ and area $A$, 
\begin{equation*}
L^2 - 4\pi A \ge 0,
\end{equation*}
with equality when the shape is a circle. \cite{blasjo}

This inequality was generalised by Banchoff and Pohl \cite{banchoff} to include curves that self-intersect. The result of their paper 
was that if $\gamma$ is a two-dimensional closed path with finite length, and $L$ is the length of $\gamma$, and $\eta(\gamma, (x,y))$ is the winding number
(which will be defined below) of $\gamma$ at $(x, y)$, then

\begin{equation*}
L^2 - 4\pi\iint\limits_{\R^2}\eta(\gamma, (x, y))^2 \,dx \,dy \ge 0.
\end{equation*}

More recently it has been shown \cite{recent} that for $1 \le p < 2$, and for all $q < \frac{2}{p}$, there exists $C_{p, q} > 0$ such 
that for all paths $\gamma : [0, 1] \rightarrow \R^2$ with finite $p$-variation, 

\begin{equation}
(\iint\limits_{\R^2}|\eta(\gamma, (x, y))|^q \,dx \,dy)^{1/q} \le  C_{p, q}\max(||\gamma||_{p, [0, 1]}, ||\gamma||^p_{p, [0, 1]}),
\end{equation}
where $||\gamma||_{p, [0, 1]}$ is the $p$-variation (which will also be defined below) of $\gamma$ on the interval $[0, 1]$.

In this article, our main contribution is that we have bounded this integral using known constants, and we have found this result with an elementary proof.

More precisely, we will prove the following:
\begin{theorem} 
If  the continuous closed curve $\gamma : [0, 1] \rightarrow \R^2$ has finite $p$-variation with $p < 2$, then
\[
(\iint\limits_{\R^2}|\eta(\gamma, (x, y))|^q \,dx \,dy)^{1/q} \le 
 (\frac{1}{2})^\frac{1}{q}(\zeta(\frac{2}{pq})-1)(||\gamma||_{p, [0, 1]})^{\frac{2}{q}} \]
for all $q \in [1, \frac{2}{p})$, where $\eta(\gamma, (x, y))$ is the winding number of $\gamma$ at $(x, y), \zeta$ is the Riemann zeta function, and
$||\gamma||_{p, [0, 1]}$ is the $p$-variation (which will be defined below) of $\gamma$ on the interval $[0, 1]. \label{eq:main}$
\end{theorem}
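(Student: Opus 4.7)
The plan is to reduce to polygonal closed curves and then carry out L.\,C. Young's iterative vertex-removal argument. I would approximate $\gamma$ by the piecewise linear interpolations $\gamma_n$ on the partitions $\{k/n : 0 \le k \le n\}$; these satisfy $\|\gamma_n\|_{p,[0,1]} \le \|\gamma\|_{p,[0,1]}$, and since any finite-$p$-variation curve with $p < 2$ is $1/p$-H\"older after reparametrization by $\|\gamma\|_{p,[0,\,\cdot\,]}^p$ and hence has Lebesgue-null image, one has $\eta_{\gamma_n} \to \eta_\gamma$ almost everywhere. Fatou's lemma then reduces the theorem to the polygonal case.

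Fix a polygonal closed curve with cyclic vertices $v_0, v_1, \ldots, v_{n-1}, v_n = v_0$ at parameter times $0 = t_0 < \cdots < t_n = 1$, and set $a_k := \omega(t_{k-1}, t_k)$, where $\omega(s,t) := \|\gamma\|_{p,[s,t]}^p$ is super-additive with $A := \omega(0,1) = \|\gamma\|_{p,[0,1]}^p$. The key Young-type estimate is purely combinatorial: in the cyclic situation with $m$ remaining vertices, $\sum_k (a_k + a_{k+1 \bmod m}) = 2 \sum_k a_k \le 2A$, so pigeonhole produces an index $k$ with $a_k + a_{k+1} \le 2A/m$, and AM--GM upgrades this to $a_k a_{k+1} \le A^2/m^2$. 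Removing $v_k$ shortcuts the polygon to one with $m - 1$ cyclic vertices, alters the winding number by $\pm \mathbf{1}_T$ (where $T$ is the triangle with vertices $v_{k-1}, v_k, v_{k+1}$), and cuts off an area bounded by $\tfrac12 |v_{k-1} - v_k|\cdot|v_k - v_{k+1}| \le \tfrac12 (a_k a_{k+1})^{1/p} \le A^{2/p}/(2 m^{2/p})$; $\omega$ remains a valid super-additive control on the coarsened partition, so the estimate iterates.

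Iterating $n - 2$ times collapses the polygon to a digon (winding number identically zero) and yields the decomposition $\eta_{\gamma_n} = \sum_{i=1}^{n-2} \epsilon_i \mathbf{1}_{T_i}$ with $\epsilon_i \in \{\pm 1\}$, where $T_i$ is the triangle cut off at the step with $m_i$ remaining vertices, $m_i$ ranging over $\{n, n-1, \ldots, 3\}$. Minkowski's inequality then gives
\[
\|\eta_{\gamma_n}\|_{L^q} \;\le\; \sum_{i=1}^{n-2} |T_i|^{1/q} \;\le\; \frac{A^{2/(pq)}}{2^{1/q}} \sum_{m=3}^{n} \frac{1}{m^{2/(pq)}} \;\le\; \Bigl(\tfrac{1}{2}\Bigr)^{1/q} \bigl(\zeta(\tfrac{2}{pq}) - 1\bigr) \|\gamma\|_{p,[0,1]}^{2/q},
\]
and convergence of the $\zeta$-series corresponds exactly to the hypothesis $q < 2/p$. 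The principal obstacle, in my view, is the approximation step rather than the Young estimate itself: one needs the qualitative (standard but not entirely trivial) fact that the image of a finite-$p$-variation curve with $p < 2$ has planar Lebesgue measure zero, so that the polygonal winding numbers converge a.e.\ and $\eta_\gamma$ is well defined. Everything else---the pigeonhole, the AM--GM, the signed-triangle decomposition, and the Minkowski step---is essentially elementary once set up properly.
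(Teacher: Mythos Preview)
Your proposal is correct and matches the paper's argument almost exactly: polygonal approximation, Young's successive vertex-removal with a pigeonhole/AM--GM bound on the area of each cut-off triangle, summation via Minkowski to produce the $(\zeta(2/pq)-1)$ constant, and passage to the limit by Fatou's lemma. The only cosmetic differences are that you run a cyclic pigeonhole (giving $2A/m$, hence a sum from $m=3$) where the paper uses the linear version from \cite{notesgiven} (giving $2/(k-1)$, sum from $2$), and you sketch the null-image/a.e.-convergence step yourself where the paper quotes it as a lemma from \cite{phdthesis}.
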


In section 2 (Preliminary), we are going to state and prove some important theorems and lemmas we are going to use to prove Theorem $\ref{eq:main}$.

In section 3 (Main proof), we are going to use this knowledge to complete this proof. We will use Young's method \cite{young} of successively removing partition points.

This idea has also been used in rough path theory. The interested reader is referred to the texts by Friz and Victoir \cite{friz}, Lyons \cite{lyons}, and Lyons, Caruana and L{\'e}vy \cite{notesgiven}. 
\section{Preliminary}

We start by defining the concepts of the \textit{partition} and $p$\textit{-variation}, which we will use heavily throughout this article. Throughout this section we will let $\gamma$ be a continuous closed path in $\R^2$.

\begin{definition}
We say $\Ptn = (t_0, ..., t_r)$ is a partition of $[0, T]$ if $t_0 = 0, t_r = T$ and $t_0 < t_1 < ... < t_{r - 1} < t_r$.
\end{definition}

\begin{definition}Given a partition $\Ptn = (t_0 < t_1 < ... < t_r)$, define 

\begin{equation*}
\gamma^\Ptn_t = \gamma_{t_i} + \frac{t - t_i}{t_{i+1}-t_i}(\gamma_{t_{i+1}}-\gamma_{t_i}).
\end{equation*}

\end{definition}

This means that if we model $\gamma$ as a path, $\gamma^\Ptn$ will be a polygonal interpolation of $\gamma$.

\begin{definition}
Let $p \ge 1$. Let $\gamma : [0, T] \rightarrow \R^2$ be a continuous function. The $p$-variation of $\gamma$ on $[0, T]$ is defined by
\begin{equation*}
||\gamma||_{p, [0, T]} = (\sup \{ \sum\limits_{j = 0}^{r - 1}|\gamma_{t_j} - \gamma_{t_{j+1}}|^p : (t_0, t_1, ..., t_r) \mbox{ is a partition of }[0, T]\})^\frac{1}{p}.
\end{equation*}
Sometimes we will omit writing the interval $[0, T]$ - in this case the interval is taken to be $[0, 1]$.
\end{definition}

\begin{definition}
Suppose that $(x, y) \in \R^2 \backslash \gamma[0, 1]$. 
Then the function $$S^\gamma : t \rightarrow \frac{\gamma_t - (x, y)}{|\gamma_t - (x, y)|}$$ 
maps $[0, 1]$ to the unit circle centred at $0$.

Let $\theta^\gamma : [0, 1] \rightarrow \R$ be a lift of $S^\gamma$ such that $$S^\gamma_t = (\cos \theta^\gamma_t,
\sin \theta^\gamma_t)\ \forall t \in [0, 1].$$
Then the winding number of $\gamma$ at the point $(x, y)$ is defined as $$\eta(\gamma, (x, y)) = \frac{\theta^\gamma(1) - \theta^\gamma(0)}{2\pi}.$$
The function $\eta(\gamma, (x, y))$ is independent of the lift $\theta^\gamma$ (see chapter 3, Lemma 1 and 2, in \cite{windingnumber}).
\end{definition}

To prove Theorem 1.1 we need to also reference an important lemma on winding numbers:

\begin{lemma} (Theorem 7.2 \cite{complexanalysis}) \label{eq:windingadd}
Let $\beta_1 : [0, 1] \rightarrow \R^2$ and $\beta_2 : [0, 1] \rightarrow \R^2$ be continuous paths such that $\beta_1(1) = \beta_1(0)$ and $\beta_2(1) = \beta_2(0).$ Also let
\begin{equation}
\beta_1 \star \beta_2 (t) =
\begin{cases}
\beta_1(2t),& 0 \le t \le \frac{1}{2}, \\
\beta_2(2t - 1) - \beta_2(0) + \beta_1(1),& \frac{1}{2} \le t \le 1.
\end{cases}
\end{equation}
Then,
\begin{equation}
\eta(\beta_1 \star \beta_2, (x, y)) = \eta(\beta_1, (x, y)) + \eta(\beta_2, (x, y)).
\end{equation}

\end{lemma}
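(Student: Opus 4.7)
The plan is to construct an explicit continuous lift $\theta : [0,1] \to \R$ of $S^{\beta_1 \star \beta_2}$ by splicing together lifts of the two halves, and then read off $\eta(\beta_1 \star \beta_2, (x,y)) = (\theta(1)-\theta(0))/(2\pi)$ directly. All of the content of the lemma is concentrated at the joint $t = 1/2$, where the two pieces must agree both as points on the circle and as real-valued angles.

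Concretely, let $\theta_1 : [0,1] \to \R$ be a lift of $S^{\beta_1}$, let $\tilde\beta_2(s) := \beta_2(s) - \beta_2(0) + \beta_1(1)$ denote the translate of $\beta_2$ that actually appears in the second half of $\beta_1 \star \beta_2$, and let $\tilde\theta_2 : [0,1] \to \R$ be a lift of $S^{\tilde\beta_2}$. Define
\[
\theta(t) = \begin{cases} \theta_1(2t), & t \in [0, 1/2], \\ \tilde\theta_2(2t-1) + c, & t \in [1/2, 1], \end{cases}
\]
where $c := \theta_1(1) - \tilde\theta_2(0)$ is chosen precisely so that the two branches agree at $t = 1/2$. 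On $[0,1/2]$ the identity $S^{\beta_1 \star \beta_2}_t = (\cos\theta(t), \sin\theta(t))$ reduces to $S^{\beta_1}_{2t} = (\cos\theta_1(2t), \sin\theta_1(2t))$, and on $[1/2, 1]$ it reduces to $S^{\tilde\beta_2}_{2t-1} = (\cos\tilde\theta_2(2t-1), \sin\tilde\theta_2(2t-1))$; both hold by the choice of $\theta_1$ and $\tilde\theta_2$, so $\theta$ is a genuine continuous lift.

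The conclusion then follows by a telescoping cancellation of $c$:
\[
\theta(1) - \theta(0) = \bigl(\tilde\theta_2(1) + c\bigr) - \theta_1(0) = \bigl[\theta_1(1) - \theta_1(0)\bigr] + \bigl[\tilde\theta_2(1) - \tilde\theta_2(0)\bigr],
\]
which after dividing by $2\pi$ gives $\eta(\beta_1, (x,y)) + \eta(\tilde\beta_2, (x,y))$. This matches the right-hand side of the lemma in the case where the translation is trivial, i.e.\ $\beta_2(0) = \beta_1(1)$, which is the concatenation-of-loops-at-a-basepoint setting in which the lemma will be applied later. The only substantive obstacle is ensuring the stitched $\theta$ is simultaneously continuous and a lift; both issues live entirely at $t = 1/2$ and are resolved by the single additive constant $c$, after which the argument is purely algebraic.
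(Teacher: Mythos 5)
The paper does not actually prove this lemma --- it is quoted as Theorem 7.2 of Stewart and Tall --- so there is no internal proof to compare against. Your lift-splicing construction is the standard argument, and the splice itself is sound: the two branches of $\beta_1 \star \beta_2$ agree at $t = 1/2$ because $\tilde\beta_2(0) = \beta_1(1)$, the constant $c = \theta_1(1) - \tilde\theta_2(0)$ makes $\theta$ continuous there, and the telescoping computation of $\theta(1) - \theta(0)$ is correct. One small point you gloss over: for $\tilde\theta_2 + c$ to remain a lift of $S^{\tilde\beta_2}$ you need $c \in 2\pi\mathbb{Z}$, which does hold because $S^{\beta_1}_1 = S^{\tilde\beta_2}_0$ (both are the unit vector from $(x,y)$ to $\beta_1(1) = \tilde\beta_2(0)$), so $\theta_1(1)$ and $\tilde\theta_2(0)$ differ by a multiple of $2\pi$. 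That is worth a sentence but is not a gap.

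The genuine gap is the one you flag yourself at the end: your argument proves $\eta(\beta_1 \star \beta_2, (x,y)) = \eta(\beta_1, (x,y)) + \eta(\tilde\beta_2, (x,y))$ with $\tilde\beta_2 = \beta_2 - \beta_2(0) + \beta_1(1)$, and the remaining identification $\eta(\tilde\beta_2, (x,y)) = \eta(\beta_2, (x,y))$ fails for a general point $(x,y)$ when $\beta_2(0) \ne \beta_1(1)$: winding numbers are invariant under translating the curve and the base point together, not the curve alone. In fact the lemma as literally stated is false without the hypothesis $\beta_2(0) = \beta_1(1)$. Take $\beta_1$ to be the constant loop at $(10,0)$ and $\beta_2(s) = (\cos 2\pi s, \sin 2\pi s)$; then the second half of $\beta_1 \star \beta_2$ traces the unit circle centred at $(9,0)$, so $\eta(\beta_1 \star \beta_2, (0,0)) = 0$ while $\eta(\beta_1, (0,0)) + \eta(\beta_2, (0,0)) = 1$. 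So either the lemma must carry the extra hypothesis $\beta_2(0) = \beta_1(1)$ --- which is satisfied everywhere the paper invokes it, since it is only ever applied to consecutive edges and loops sharing a basepoint, where the translation in the definition of $\star$ is the identity --- or its conclusion must be restated as $\eta(\beta_1, (x,y)) + \eta(\beta_2, (x,y) + \beta_2(0) - \beta_1(1))$. Your proof is complete for the matched-basepoint case the paper actually needs; to present it as a proof of the stated lemma you should make that restriction (or the translation of the evaluation point) explicit rather than leaving it as a closing remark.
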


We will also use the Minkowski inequality as displayed below.
\begin{lemma}
Let $f$ and $g$ be measurable functions. Then
\begin{equation}
(\iint\limits_{\R^2}|f(x, y) + g(x,y)|^q \,dx \,dy)^{1/q} \le (\iint\limits_{\R^2}|f(x, y)|^q \,dx \,dy)^{1/q} + 
 (\iint\limits_{\R^2}|g(x, y)|^q \,dx \,dy)^{1/q}. \label{eq:minkowski}
\end{equation}
\end{lemma}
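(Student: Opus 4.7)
The plan is to prove Minkowski's inequality by the standard route via Hölder's inequality. The case $q = 1$ is immediate from the pointwise triangle inequality $|f+g| \le |f| + |g|$ and linearity of the integral, so I will focus on $q > 1$ (the lemma is applied in this paper with $q \in [1, 2/p)$, so $q \ge 1$ always holds).

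For $q > 1$, I would first write $|f+g|^q = |f+g|\cdot |f+g|^{q-1}$ and apply the pointwise triangle inequality to obtain
\[ \iint\limits_{\R^2} |f+g|^q \, dx\, dy \;\le\; \iint\limits_{\R^2} |f|\,|f+g|^{q-1} \, dx\, dy \;+\; \iint\limits_{\R^2} |g|\,|f+g|^{q-1} \, dx\, dy. \]
Next I would apply Hölder's inequality with conjugate exponents $q$ and $q' = q/(q-1)$ to each term on the right. Since $(q-1)q' = q$, this yields
\[ \iint\limits_{\R^2} |f|\,|f+g|^{q-1} \, dx\, dy \;\le\; \Bigl(\iint\limits_{\R^2} |f|^q \, dx\, dy\Bigr)^{1/q} \Bigl(\iint\limits_{\R^2} |f+g|^q \, dx\, dy\Bigr)^{(q-1)/q}, \]
and analogously for $g$. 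Summing the two bounds and dividing both sides by $\bigl(\iint_{\R^2} |f+g|^q\, dx\, dy\bigr)^{(q-1)/q}$ recovers precisely the stated inequality after noting $1 - (q-1)/q = 1/q$.

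The main obstacle, such as it is, lies in justifying the division step, i.e.\ ruling out degenerate cases. If $\iint_{\R^2}|f+g|^q\, dx\, dy = \infty$, then by the elementary bound $|f+g|^q \le 2^{q-1}(|f|^q + |g|^q)$ at least one of $\iint |f|^q\, dx\, dy$, $\iint |g|^q\, dx\, dy$ must also be infinite, so the asserted inequality holds trivially. If $\iint_{\R^2}|f+g|^q\, dx\, dy = 0$, the left side of the desired inequality vanishes and the bound is again automatic. In the remaining case the quantity we divide by is strictly positive and finite, so the manipulation is legitimate. Hölder's inequality itself would be invoked as a standard prerequisite (derivable from Young's inequality $ab \le a^q/q + b^{q'}/q'$), and this is the only non-elementary ingredient in the argument.
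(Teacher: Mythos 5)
Your proof is correct and complete: it is the classical derivation of Minkowski's inequality from H\"older's inequality, and you have properly handled the degenerate cases (the integral of $|f+g|^q$ being zero or infinite) that are needed to justify the division step, as well as the trivial case $q=1$. The paper itself offers no proof of this lemma --- it simply states Minkowski's inequality as a known standard fact to be invoked later --- so there is nothing in the source to compare against; your argument supplies the standard justification that the paper omits.
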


\begin{lemma}
Let $\gamma$ be a continuous closed path in $\R^2$ with finite $p$-variation with $p < 2$. Let $\Ptn = (t_1, ..., t_r)$. Then for any $j_1,...,j_n$ and for all 
$n \in \{1, 2, ..., r - 2\} $,

\begin{multline*} 
(\iint\limits_{\R^2} |\eta(\gamma^{\Ptn \backslash \{t_{j_1}, ..., t_{j_n}\}}, (x, y)) - 
\eta(\gamma^{\Ptn \backslash \{t_{j_1}, t_{j_2}, ..., t_{j_{n+1}}\}}, (x, y))|^q \,dx \,dy) \\
\le  \frac{1}{2 \times 2^\frac{2}{p}}||\gamma||^2_{p, [t_{j_{n+1}-1}, t_{j_{n+1}+1}]}.
\end{multline*} \label{eq:trianglelemma}
\end{lemma}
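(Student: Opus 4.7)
The plan is to observe that the two polygonal paths in the statement differ only by a single triangular detour, and then to bound the resulting error by the area of that triangle. Write $P = \Ptn \backslash \{t_{j_1}, \dots, t_{j_n}\}$ and $P' = P \backslash \{t_{j_{n+1}}\}$, and let $u, v, w$ denote $t_{j_{n+1}-1}, t_{j_{n+1}}, t_{j_{n+1}+1}$ in the lemma's notation, i.e.\ $v = t_{j_{n+1}}$ and $u, w$ are its two neighbours in the coarsened partition $P$. Set $A = \gamma_u$, $B = \gamma_v$, $C = \gamma_w$. Outside $[u, w]$ the paths $\gamma^P$ and $\gamma^{P'}$ coincide; on $[u, w]$ the path $\gamma^P$ traces the broken detour $A \to B \to C$, while $\gamma^{P'}$ traces the chord $A \to C$.

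The first step is to establish
\begin{equation*}
\eta(\gamma^P, (x, y)) - \eta(\gamma^{P'}, (x, y)) = \eta(\triangle, (x, y)),
\end{equation*}
where $\triangle$ is the triangular closed loop $A \to B \to C \to A$. I would do this by invoking Lemma~\ref{eq:windingadd} after cyclically reparametrising $\gamma^P$ so that it starts and finishes at $A$. Once this is done, $\gamma^P$ factors (up to reparametrisation and translation, both of which leave the winding number invariant) as the $\star$-concatenation of $\triangle$ with the loop $A \to C \to \dots \to A$, which is itself a reparametrisation of $\gamma^{P'}$. Lemma~\ref{eq:windingadd} then delivers the identity.

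The second step is immediate: $\triangle$ is a triangular loop (the degenerate collinear case makes both sides of the lemma trivially zero), so $\eta(\triangle, (x, y)) \in \{-1, 0, +1\}$ and vanishes off the closed triangular region. Therefore $|\eta(\triangle, (x, y))|^q = |\eta(\triangle, (x, y))|$ pointwise, and the double integral equals $\text{Area}(\triangle)$. Setting $a = |B - A|$ and $b = |C - B|$, we have $\text{Area}(\triangle) = \tfrac{1}{2} a b \sin \theta \le \tfrac{1}{2} a b$. The AM--GM inequality gives $2(ab)^{p/2} \le a^p + b^p$, hence $ab \le 2^{-2/p}(a^p + b^p)^{2/p}$. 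Finally, since $\{u, v, w\}$ is a partition of $[u, w]$, the definition of $p$-variation yields $a^p + b^p \le ||\gamma||_{p, [u, w]}^p$; raising to the power $2/p$ and multiplying through by $\tfrac{1}{2 \cdot 2^{2/p}}$ gives exactly the bound claimed in the lemma.

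The main obstacle is step one. Lemma~\ref{eq:windingadd} is stated for the specific concatenation $\star$ of two based closed loops, whereas $\gamma^P$ and $\gamma^{P'}$ are parametrised as single traversals of their polygons starting from $\gamma_0$. Spelling out the cyclic reparametrisation, and invoking the standard facts that the winding number is invariant under reparametrisation and under translation of the loop, is the only nontrivial topological input; the remainder of the argument is an area estimate, AM--GM, and the definition of $p$-variation.
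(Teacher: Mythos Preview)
Your proposal is correct and follows essentially the same route as the paper: identify the difference of winding numbers as the winding number of the removed triangle via Lemma~\ref{eq:windingadd}, equate the integral with the triangle's area, and bound that area by $\tfrac{1}{2}ab$, AM--GM on $a^{p/2},b^{p/2}$, and the definition of $p$-variation. If anything, you are more explicit than the paper about the cyclic reparametrisation needed to put $\gamma^P$ into the form required by Lemma~\ref{eq:windingadd}.
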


\begin{proof}
 
\begin{figure}
\includegraphics[left]{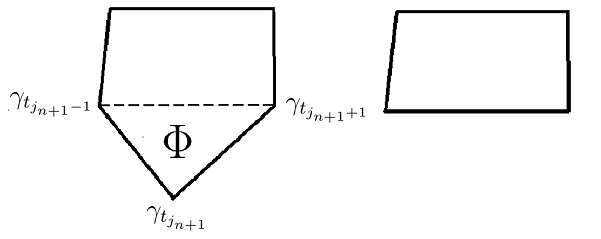}
\caption{
This figure illustrates the removal of a partition point from a pentagonal path.
The diagram on the left represents $\gamma^{\Ptn \backslash \{t_{j_1}, ..., t_{j_n}\}}$, and the diagram on the right represents 
$\gamma^{\Ptn \backslash \{t_{j_1}, t_{j_2}, ..., t_{j_{n+1}}\}}$.}
\end{figure}

Let $\Phi$ denote

\begin{equation}
[\gamma_{t_{j_{n+1}-1}}, \gamma_{t_{j_{n+1}}}] \star [ \gamma_{t_{j_{n+1}}}, \gamma_{t_{j_{n+1}+1}}]
\star [\gamma_{t_{j_{n+1}+1}}, \gamma_{t_{j_{n+1}-1}}],
\end{equation}
with $[a, b]$ denoting the straight line between $a$ and $b$.

By the additivity of the winding numbers, Lemma \ref{eq:windingadd}, the functions
$$(x, y) \rightarrow \eta(\gamma^{\Ptn \backslash \{t_{j_1}, t_{j_2}, ..., t_{j_n}\}}, (x, y))$$
and
$$(x, y) \rightarrow \eta( \gamma^{\Ptn \backslash \{t_{j_1}, t_{j_2}, ..., t_{j_{n+1}}\}}, (x, y))$$
coincide except inside $\Phi$, where the two functions differ by $\eta(\Phi, (x, y))$. Figure 1 is an illustration of this.

Therefore,
\begin{equation}
\eta(\gamma^{\Ptn \backslash \{t_{j_1}, t_{j_2}, ..., t_{j_n}\}}, (x, y)) -  \eta( \gamma^{\Ptn \backslash \{t_{j_1}, t_{j_2}, ..., t_{j_{n+1}}\}}, (x, y)) = \eta(\Phi, (x, y)).
\end{equation}

It follows that
\begin{multline*}
\iint\limits_{\R^2} |\eta(\gamma^{\Ptn \backslash \{t_{j_1}, t_{j_2}, ..., t_{j_n}\}}, (x, y)) -  \eta( \gamma^{\Ptn \backslash \{t_{j_1}, t_{j_2}, ..., t_{j_{n+1}}\}}, (x, y))|^q \,dx \,dy \\
 = \iint\limits_{\R^2} |\eta(\Phi, (x, y))|^q \,dx \,dy.
\end{multline*}

Since $\eta(\Phi, (x, y))$ is $1$ or $-1$ inside $\Phi$ and $0$ outside, 
\begin{equation}
\iint\limits_{\R^2} |\eta(\Phi, (x, y))|^q \,dx \,dy = \mbox{Area}(\triangle_{t_{j_{n+1}-1},
t_{j_{n+1}}, t_{j_{n+1}+1}}),
\end{equation}

where $\triangle_{t_{j_{n+1}-1},
t_{j_{n+1}}, t_{j_{n+1}+1}}$ is the triangle with vertices $\gamma_{t_{j_{n+1}-1}},
\gamma_{t_{j_{n+1}}}, $ and $ \gamma_{t_{j_{n+1}+1}}$.

Now, using the inequality $ab \le \frac{a^2 + b^2}{2}$,
\begin{multline}
\begin{aligned}
\mbox{Area}(\triangle&_{t_{j_{n+1}-1},
t_{j_{n+1}}, t_{j_{n+1}+1}}) \\
&\le \frac{|\gamma_{t_{j_{n+1}-1}} - \gamma_{t_{j_{n+1}}}||\gamma_{t_{j_{n+1}+1}} - \gamma_{t_{j_{n+1}}}|}{2} \\
&= \frac{(|\gamma_{t_{j_{n+1}-1}} - \gamma_{t_{j_{n+1}}}|^{\frac{p}{2}}|\gamma_{t_{j_{n+1}+1}} - \gamma_{t_{j_{n+1}}}|^\frac{p}{2})^\frac{2}{p}}{2} \\
&\le \frac{(|\gamma_{t_{j_{n+1}-1}} - \gamma_{t_{j_{n+1}}}|^p +|\gamma_{t_{j_{n+1}+1}} - \gamma_{t_{j_{n+1}}}|^p )^\frac{2}{p}}{2 \times 2^\frac{2}{p}} \\
&\le \frac{||\gamma||^2_{p, [t_{j_{n+1}-1}, t_{j_{n+1}+1}]}}{2 \times 2^\frac{2}{p}}.
\end{aligned}
\end{multline}

\end{proof}

\section{Main proof}
In this section we will let $\gamma : [0, 1] \rightarrow \R^2$ be a continuous closed path in $\R^2$ with finite $p$-variation, $p < 2$, and let $\Ptn$ be a partition of $[0, 1]$.

We begin by stating an important theorem to be used:

\begin{theorem} (\cite{notesgiven})
For any partition $\Ptn = (t_0 < t_1 < ... < t_k)$ there exists $j \in \{1, ..., k-1\}$ such that
\begin{equation} 
||\gamma||_{p, [t_{j-1}, t_{j+1}]} \le \frac{2}{k-1} ||\gamma||_{p, [0, 1]}. \label{eq:partitionj}
 \end{equation} 
\end{theorem}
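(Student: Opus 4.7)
The plan is a Young-style pigeonhole: I would bound $\sum_{j=1}^{k-1}||\gamma||^p_{p,[t_{j-1},t_{j+1}]}$ by a small multiple of $||\gamma||^p_{p,[0,1]}$ and then average to extract the desired $j$. The only tool needed is super-additivity of the $p$-th-power $p$-variation,
\begin{equation*}
||\gamma||^p_{p,[a,b]}+||\gamma||^p_{p,[b,c]}\le||\gamma||^p_{p,[a,c]},
\end{equation*}
which holds because concatenating a partition of $[a,b]$ with a partition of $[b,c]$ gives a partition of $[a,c]$; the inequality iterates to any family of non-overlapping subintervals of $[0,1]$.

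To handle the overlapping triples $[t_{j-1},t_{j+1}]$, I would split the sum by the parity of $j$. The odd-$j$ subintervals $[t_0,t_2],[t_2,t_4],\ldots$ meet only at single endpoints and lie inside $[0,1]$, so iterated super-additivity yields $\sum_{j\text{ odd}}||\gamma||^p_{p,[t_{j-1},t_{j+1}]}\le||\gamma||^p_{p,[0,1]}$; the even-$j$ subintervals $[t_1,t_3],[t_3,t_5],\ldots$ satisfy the same bound. Adding them,
\begin{equation*}
\sum_{j=1}^{k-1}||\gamma||^p_{p,[t_{j-1},t_{j+1}]}\le 2\,||\gamma||^p_{p,[0,1]},
\end{equation*}
and pigeonholing over the $k-1$ summands produces some $j$ with $||\gamma||^p_{p,[t_{j-1},t_{j+1}]}\le\tfrac{2}{k-1}||\gamma||^p_{p,[0,1]}$, i.e.\ $||\gamma||_{p,[t_{j-1},t_{j+1}]}\le(\tfrac{2}{k-1})^{1/p}||\gamma||_{p,[0,1]}$.

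The main obstacle is that the constant this approach lands on, for the un-powered $p$-variation, is $(\tfrac{2}{k-1})^{1/p}$, whereas the statement asks for $\tfrac{2}{k-1}$; for $p>1$ and $k\ge 3$ the stated constant is strictly smaller and the inequality in fact fails in that stronger form. For instance, the closed piecewise-linear $\gamma:[0,1]\to\R^2$ through the five values $0,(1,0),(-1,0),(1,0),0$ at a five-point partition gives $\min_j||\gamma||_{p,[t_{j-1},t_{j+1}]}/||\gamma||_{p,[0,1]}=2^{-1/p}$, which exceeds $\tfrac{2}{k-1}=\tfrac{2}{3}$ whenever $p>1/\log_2(3/2)$, and in particular for $p$ close enough to $2$ within the range of Theorem \ref{eq:main}. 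I would therefore read the statement as a typographical slip for the $p$-th-power form $||\gamma||^p_{p,[t_{j-1},t_{j+1}]}\le\tfrac{2}{k-1}||\gamma||^p_{p,[0,1]}$, which the argument above delivers and which is also the form actually needed downstream: the Minkowski sum $\sum_n(\tfrac{2}{k-n})^{2/(pq)}$ producing $\zeta(\tfrac{2}{pq})-1$ in the main theorem requires precisely the $1/p$-root on $||\gamma||_{p,[t_{j-1},t_{j+1}]}$, not the un-rooted $\tfrac{2}{k-1}$.
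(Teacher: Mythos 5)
Your proof is correct and is essentially the intended argument (the paper gives no proof of its own here, citing \cite{notesgiven}): superadditivity of $||\gamma||^p_{p,[\cdot,\cdot]}$ over adjacent intervals, a parity split of the overlapping triples $[t_{j-1},t_{j+1}]$ to obtain the factor $2$, and a pigeonhole over the $k-1$ summands. Your diagnosis of the exponent is also right: the inequality as literally stated can fail for $p$ close to $2$ (your back-and-forth example gives ratio $2^{-1/p}>\tfrac{2}{3}$), and the paper itself confirms the typographical slip by immediately invoking the theorem in the $p$-th-power form $||\gamma||^p_{p,[t_{j-1},t_{j+1}]}\le\frac{2}{r-1}||\gamma||^p_{p,[0,1]}$ in the displays that follow, which is exactly what your argument establishes and what the derivation of (\ref{eq:deduction}) requires.
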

We now use Young's method of successively removing points to form an upper bound. Let $\Ptn = (t_0 < ... < t_r)$;
by ($\ref{eq:partitionj}$) with partition $\Ptn$ there exists $j_1 \in \{1, ..., r-1\}$ such that

\begin{equation*} 
||\gamma||^p_{p, [t_{j_1-1}, t_{j_1+1}]} \le \frac{2}{r-1} ||\gamma||^p_{p, [0, 1]}. 
\end{equation*}

We now remove $t_{j_1}$ from $\Ptn$ - therefore we have $r - 2$ points left. By abuse of notation, we relabel these $r - 2$ points as $t_1, ..., t_{r - 2}.$

Now with partition $\Ptn \backslash \{t_{j_1}\}$ there exists $j_2 \in \{1, ..., r - 2\}$ such that 
\begin{equation*} 
||\gamma||^p_{p, [t_{{j_2}-1}, t_{{j_2}+1}]} \le \frac{2}{r - 2} ||\gamma||^p_{p, [0, 1]}.
\end{equation*}

We then generalise this for removing any number of points. For all $n \in \{1, 2,..., r - 2\} $, we can remove $n - 1$ points from $\Ptn$, using abuse of notation to label the $r - n$ points left as $t_1, ..., t_{r - n}$. Then there exists $ j_n \in \{1, ..., r - n\}$ such that 
\begin{equation}
||\gamma||^p_{p, [t_{{j_n}-1}, t_{j_n+1}]} 
\le \frac{2}{r - n} ||\gamma||^p_{p, [0, 1]}. \label{eq:rempoints}
\end{equation}

Removing $r - 2$ points from $\Ptn$, we are left with the path $\gamma^{\Ptn \backslash \{t_{j_1}, t_{j_2}, ..., t_{j_{r - 2}}\}}$. As the partition only has three points left,
and $$\gamma^{\Ptn \backslash \{t_{j_1}, t_{j_2}, ..., t_{j_{r - 2}}\}}(t_0) = \gamma^{\Ptn \backslash \{t_{j_1}, t_{j_2}, ..., t_{j_{r - 2}}\}}(t_r),$$
the path only has two points left. Therefore, 
\begin{equation}
\eta(\gamma^{\Ptn \backslash \{t_{j_1}, t_{j_2}, ..., t_{j_{r - 2}}\}}, (x, y))) = 0\ 
\end{equation}
for all $(x, y)$ outside $\gamma^{\Ptn \backslash \{t_{j_1}, t_{j_2}, ..., t_{j_{r - 2}}\}}$.
 It is then clear that for all $(x, y)$ outside $\gamma^\Ptn$, we have
\begin{multline}
\begin{split}
\eta(\gamma^\Ptn, (x, y)) &= (\eta(\gamma^\Ptn, (x, y)) - \eta(\gamma^{\Ptn \backslash \{t_{j_1}\}}), (x, y)) \\
+ &(\eta(\gamma^{\Ptn \backslash \{t_{j_1}\}}, (x, y)) - \eta(\gamma^{\Ptn \backslash \{t_{j_1}, t_{j_2}\}}, (x, y))) \\
+ &... \\
+ &(\eta(\gamma^{\Ptn \backslash \{t_{j_1}, t_{j_2}, ..., t_{j_{r - 3}}\}}, (x, y)) - 
\eta(\gamma^{\Ptn \backslash \{t_{j_1}, t_{j_2}, ..., t_{j_{r - 2}}\}}, (x, y))). \label{eq:clearsum}
\end{split}
\end{multline}

Therefore, by an extension of the Minkowski inequality ($\ref{eq:minkowski}$) to $r - 2$ functions,
\begin{multline}
\begin{aligned}
&(\iint\limits_{\R^2} |\eta(\gamma^\Ptn, (x, y))|^q \,dx \,dy)^{1/q} \\
&\le (\iint\limits_{\R^2} |\eta(\gamma^\Ptn, (x, y)) - \eta(\gamma^{\Ptn \backslash \{t_{j_1}\}}, (x, y)) |^q \,dx \,dy)^{1/q} \\
&+ (\iint\limits_{\R^2} |\eta(\gamma^{\Ptn \backslash \{t_{j_1}\}}, (x, y)) - 
  \eta(\gamma^{\Ptn \backslash \{t_{j_1}, t_{j_2}\}}, (x, y))|^q \,dx \,dy)^{1/q} \\
&+ ... \\
&+ (\iint\limits_{\R^2} |\eta(\gamma^{\Ptn \backslash \{t_{j_1}, t_{j_2}, ..., t_{j_{r - 3}}\}}, (x, y)) - 
\eta(\gamma^{\Ptn \backslash \{t_{j_1}, t_{j_2}, ..., t_{j_{r - 2}}\}}, (x, y))|^q \,dx \,dy)^{1/q}. \label{eq:minkowskiextension}
\end{aligned}
\end{multline}

We can now create the inequality which forms the base of the final proof.
\begin{lemma} For $q < \frac{2}{p}$,
\begin{equation}
(\iint\limits_{\R^2} |\eta(\gamma^\Ptn, (x, y))|^q \,dx \,dy)^{1/q} \le (\frac{1}{2})^\frac{1}{q}(\zeta(\frac{2}{pq})-1)(||\gamma||_{p, [0, 1]})^{\frac{2}{q}}. \label{eq:trianglebound}
\end{equation}
\end{lemma}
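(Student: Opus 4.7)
The plan is to combine the three ingredients just assembled: the Minkowski decomposition (\ref{eq:minkowskiextension}), the triangle-area bound of Lemma \ref{eq:trianglelemma}, and the sparse-partition selection (\ref{eq:rempoints}). Starting from (\ref{eq:minkowskiextension}), I would apply Lemma \ref{eq:trianglelemma} to each of the $r-2$ summands, which turns the $n$-th term into
\[
\left(\frac{\|\gamma\|^{2}_{p,[t_{j_{n}-1},\,t_{j_{n}+1}]}}{2\cdot 2^{2/p}}\right)^{1/q}.
\]
Next I would insert the bound (\ref{eq:rempoints}), which controls the local $p$-variation at the $n$-th removal step by $\frac{2}{r-n}\|\gamma\|^{p}_{p,[0,1]}$. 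Raising this to the power $2/p$ and substituting yields, for the $n$-th term,
\[
\left(\frac{1}{2(r-n)^{2/p}}\|\gamma\|^{2}_{p,[0,1]}\right)^{1/q}
= \left(\tfrac{1}{2}\right)^{1/q}\frac{1}{(r-n)^{2/(pq)}}\,\|\gamma\|^{2/q}_{p,[0,1]},
\]
the point being that the ungainly factor $2^{2/p}$ in the denominator of Lemma \ref{eq:trianglelemma} is exactly cancelled by the $2^{2/p}$ arising from raising the ``$2$'' in $\frac{2}{r-n}$ to the power $2/p$.

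Summing over $n = 1, \ldots, r-2$ and reindexing by $m = r-n$ collapses the estimate to
\[
\left(\tfrac{1}{2}\right)^{1/q}\|\gamma\|^{2/q}_{p,[0,1]}\sum_{m=2}^{r-1}\frac{1}{m^{2/(pq)}}.
\]
The hypothesis $q < 2/p$ gives $2/(pq) > 1$, so this partial sum is majorised uniformly in $r$ by $\zeta(2/(pq)) - 1$, producing precisely the claimed bound.

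The only real obstacle is bookkeeping: keeping the indices in the Minkowski telescoping consistent with the successive relabellings used when invoking (\ref{eq:rempoints}), and checking that the exponents combine to exactly $(1/2)^{1/q}$ in front of the zeta series. The fact that the convergence threshold for the zeta series lines up precisely with the hypothesis $q < 2/p$ is a built-in sanity check that the constants have been chased correctly.
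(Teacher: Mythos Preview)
Your proposal is correct and follows essentially the same argument as the paper: apply Lemma~\ref{eq:trianglelemma} to each term of the Minkowski decomposition~(\ref{eq:minkowskiextension}), insert the partition bound~(\ref{eq:rempoints}), observe the cancellation of the $2^{2/p}$ factors, and bound the resulting partial zeta sum by $\zeta(2/(pq))-1$ using $q<2/p$. The only cosmetic difference is that the paper first passes through the exponent $p/2$ before converting to $1/q$, whereas you go directly to the $1/q$ power; the algebra and indexing are otherwise identical.
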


\begin{proof}

From Lemma $\ref{eq:trianglelemma}$,
\begin{multline}
(\iint\limits_{\R^2} |\eta(\gamma^{\Ptn \backslash \{t_{j_1}, t_{j_2}, ..., t_{j_n}\}}, (x, y)) - 
\eta(\gamma^{\Ptn \backslash \{t_{j_1}, t_{j_2}, ..., t_{j_{n+1}}\}}, (x, y))|^q \,dx \,dy)^\frac{p}{2} \\
 \le \frac{1}{2 \times 2^\frac{p}{2}}||\gamma||^p_{p, [t_{j_{n+1}-1}, t_{j_{n+1}+1}]} \\
 \le \frac{1}{2^\frac{p}{2}}(\frac{1}{r - (n + 1)})||\gamma||^p_{p, [0, 1]}\ \mbox{(applying (\ref{eq:rempoints}))} \\
\Rightarrow
(\iint\limits_{\R^2} |\eta(\gamma^{\Ptn \backslash \{t_{j_1}, t_{j_2}, ..., t_{j_n}\}}, (x, y)) - 
\eta(\gamma^{\Ptn \backslash \{t_{j_1}, t_{j_2}, ..., t_{j_{n+1}}\}}, (x, y))|^q \,dx \,dy)^\frac{1}{q} \\
\le \frac{1}{2^\frac{1}{q}}(\frac{1}{(r-(n+1))}||\gamma||^p_{p, [0, 1]})^{\frac{2}{pq}}. \label{eq:deduction}
\end{multline}

We then use this inequality to simplify each integral in ($\ref{eq:minkowskiextension}$),  as follows: 
\begin{multline}
\begin{split}
(\iint\limits_{\R^2} |\eta(\gamma^\Ptn, (x, y))|^q \,dx \,dy)^{1/q}& \\
\le &\frac{1}{2^\frac{1}{q}}(\frac{1}{r-1}||\gamma||^p_{p, [0, 1]})^{\frac{2}{pq}} \\
&+ \frac{1}{2^\frac{1}{q}}(\frac{1}{r-2}||\gamma||^p_{p, [0, 1]})^{\frac{2}{pq}} \\
&+ ... \\
&+ \frac{1}{2^\frac{1}{q}}(\frac{1}{2}||\gamma||^p_{p, [0, 1]})^{\frac{2}{pq}}.
\end{split}
\end{multline}
Then, taking the sum, 
\begin{multline}
\begin{split}
(\iint\limits_{\R^2} |\eta(\gamma^\Ptn, (x, y))|^q \,dx \,dy)^{1/q}& \\
\le &\frac{1}{2^\frac{1}{q}}(\sum\limits_{n=2}^{r-1} (\frac{1}{n})^\frac{2}{pq})
||\gamma||^\frac{2}{q}_{p, [0, 1]} \\
\le &\frac{1}{2^\frac{1}{q}}(\zeta(\frac{2}{pq})-1)||\gamma||^{\frac{2}{q}}_{p, [0, 1]}.
\end{split}
\end{multline}

To bound the above integral from above, we thus require $\zeta(\frac{2}{pq})$ to be finite.
 For this we need $1 < \frac{2}{pq}$ so we need $q < \frac{2}{p}$. 

\end{proof}

Now to complete the proof we create a new partition $\Ptn_n$.

\begin{definition}
Let $\Ptn_n = \{0, \frac{1}{n}, \frac{2}{n}, ..., \frac{n-1}{n}, 1\}$.
\end{definition}

We also need one more lemma; it is known that

\begin{lemma} (\cite{phdthesis}) For all $(x, y)$ outside the image of $\cup^\infty_{n=1} \gamma^{\Ptn_n} \cup \gamma^n$,
\begin{equation}
\liminf_{n\to\infty} \eta(\gamma^{\Ptn_n}, (x, y)) = \eta(\gamma, (x, y)). \label{eq:liminflemma} 
\end{equation}
\end{lemma}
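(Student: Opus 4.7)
The plan is to exploit the uniform convergence of the polygonal interpolations $\gamma^{\Ptn_n}$ to $\gamma$ and reduce the claim to a standard continuity property of the winding number under uniform convergence of loops that avoid the base point. Since the limit of $\eta(\gamma^{\Ptn_n},(x,y))$ will actually exist, the $\liminf$ statement will follow a fortiori.

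First, I would observe that $\gamma$ is uniformly continuous on $[0,1]$ because it is continuous on a compact interval. For $t \in [\tfrac{i}{n},\tfrac{i+1}{n}]$, $\gamma^{\Ptn_n}(t)$ is a convex combination of $\gamma(\tfrac{i}{n})$ and $\gamma(\tfrac{i+1}{n})$, so the uniform modulus of continuity of $\gamma$ yields $\sup_{t \in [0,1]} |\gamma^{\Ptn_n}(t) - \gamma(t)| \to 0$ as $n \to \infty$. Next, since $(x,y)$ lies outside $\gamma([0,1])$ and $\gamma([0,1])$ is compact, $\varepsilon := d((x,y),\gamma([0,1])) > 0$. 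Combining these two facts, for all sufficiently large $n$ we have $|\gamma^{\Ptn_n}(t) - (x,y)| \ge \tfrac{\varepsilon}{2}$ uniformly in $t$.

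Third, I would combine this uniform lower bound with the lift characterisation of $\eta$ used in the definition of the winding number. The normalised maps $S^{\gamma^{\Ptn_n}}$ converge uniformly to $S^\gamma$, because $v \mapsto v/|v|$ is Lipschitz on the region $\{v \in \R^2 : |v| \ge \tfrac{\varepsilon}{2}\}$. Choosing lifts $\theta^{\gamma^{\Ptn_n}}$ with $\theta^{\gamma^{\Ptn_n}}(0) \to \theta^\gamma(0)$, the uniqueness of continuous lifts up to integer multiples of $2\pi$ forces $\sup_{t \in [0,1]} |\theta^{\gamma^{\Ptn_n}}(t) - \theta^\gamma(t)| \to 0$. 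Evaluating at $t = 0$ and $t = 1$ and dividing by $2\pi$ gives $\eta(\gamma^{\Ptn_n}, (x,y)) \to \eta(\gamma, (x,y))$, which implies the claimed equality of the $\liminf$.

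The main obstacle is the rigorous handling of the lifts, but this is a standard covering-space argument once the uniform distance bound is in place. A cleaner alternative, which avoids lifts altogether, is to appeal directly to the homotopy invariance of the winding number: for $n$ large enough, the straight-line homotopy $H(s,t) = (1-s)\gamma(t) + s\gamma^{\Ptn_n}(t)$ is contained in $\R^2 \setminus \{(x,y)\}$ (by the $\tfrac{\varepsilon}{2}$ bound and the triangle inequality), so $\eta(\gamma^{\Ptn_n},(x,y)) = \eta(\gamma,(x,y))$ for all sufficiently large $n$, strengthening the statement to actual equality rather than just equality of the $\liminf$.
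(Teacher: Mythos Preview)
Your argument is correct and in fact proves more than the stated $\liminf$ equality: you show that $\eta(\gamma^{\Ptn_n},(x,y))$ is eventually equal to $\eta(\gamma,(x,y))$, via either the lift argument or, more cleanly, the straight-line homotopy. Both routes are standard and sound once you have the uniform convergence $\gamma^{\Ptn_n}\to\gamma$ and the positive-distance bound $d((x,y),\gamma([0,1]))>0$.

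As for comparison with the paper: the paper does not give its own proof of this lemma. It is quoted as a known result from \cite{phdthesis}, so there is no in-text argument to compare against. Your proof is exactly the kind of elementary justification one would expect to supply for such a citation, and the homotopy-invariance version is the cleanest way to package it.
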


We are now in a position to complete the proof of Theorem 1.1.
\begin{proof}

We can take the limit infinum of both sides of ($\ref{eq:trianglebound}$) as the number of partition points $n$ goes to infinity:

\begin{equation*}
\liminf_{n\to\infty}(\iint\limits_{\R^2} |\eta(\gamma^{\Ptn_n}, (x, y))|^q \,dx \,dy)^{1/q} \le  
\liminf_{n\to\infty}(\frac{1}{2^\frac{1}{q}}(\zeta(\frac{2}{pq})-1)||\gamma||_{p, [0, 1]}^{\frac{2}{q}}).
\end{equation*}

As the right hand side is constant, its limit infinum is itself, so we can rewrite the equation as follows:

\begin{equation}
\liminf_{n\to\infty}(\iint\limits_{\R^2} |\eta(\gamma^{\Ptn_n}, (x, y))|^q \,dx \,dy)^{1/q} \le  
\frac{1}{2^\frac{1}{q}}(\zeta(\frac{2}{pq})-1)||\gamma||_{p, [0, 1]}^{\frac{2}{q}}. \label{eq:liminf}
\end{equation}

Due to Fatou's Lemma,

\begin{equation}
(\iint\limits_{\R^2}\liminf_{n\to\infty} |\eta(\gamma^{\Ptn_n}, (x, y))|^q \,dx \,dy)^{1/q} \le \frac{1}{2^\frac{1}{q}}
(\zeta(\frac{2}{pq})-1)||\gamma||_{p, [0, 1]}^{\frac{2}{q}}. \label{eq:fatouextension}
\end{equation}

Therefore due to ($\ref{eq:liminflemma}$),

\begin{equation*}
(\iint\limits_{\R^2}|\eta(\gamma, (x, y))|^q \,dx \,dy)^{1/q} 
\le  \frac{1}{2^\frac{1}{q}}(\zeta(\frac{2}{pq})-1)||\gamma||_{p, [0, 1]}^{\frac{2}{q}}.
\end{equation*}

\end{proof}

\section{Acknowledgements}
I would like to acknowledge the support of the EPSRC Vacation Bursary. I would like to thank my supervisor Horatio Boedihardjo for his support and insights. I also thank the referee for their useful suggestions and comments.

\newpage

\end{document}